\newtheorem{theorem}{Theorem}[section]
\newtheorem{proposition}[theorem]{Proposition}
\newtheorem{corollary}{Corollary}[theorem]
\theoremstyle{definition}
\newtheorem{definition}[theorem]{Definition}
\newtheorem{example}[theorem]{Example}
\theoremstyle{remark}
\newtheorem{remark}[theorem]{Remark}
\newcommand{\tilesize}{0.8}      
\newcommand{\knotthickness}{4pt} 
\newcommand{\cspace}{0.16}       
\newcommand{\tileedge}{0.4pt}    
\tikzset{
  tilestyle/.style={draw=black, line width=\tileedge},
  knotstyle/.style={draw=black, line cap=round, line join=round, line width=\knotthickness},
  mosai/.style={
    execute at begin cell=\node\bgroup,
    execute at end cell=\egroup,
    column sep={\tilesize cm,between origins},
    row sep={\tilesize cm,between origins},
    inner sep=0, outer sep=0
  }
}
\newcommand{\tileo}{\tikz \draw[tilestyle] (0,0) rectangle (\tilesize,\tilesize);}
\newcommand{\tilei}[1][{}]{\tikz{
  \draw[tilestyle] (0,0) rectangle (\tilesize,\tilesize);
  \begin{scope}\clip (0,0) rectangle (\tilesize,\tilesize);
    \draw[#1,knotstyle] (0,.5*\tilesize) to[out=0,in=90] (.5*\tilesize,0);
  \end{scope}
}}
\newcommand{\tileii}[1][{}]{\tikz{
  \draw[tilestyle] (0,0) rectangle (\tilesize,\tilesize);
  \begin{scope}\clip (0,0) rectangle (\tilesize,\tilesize);
    \draw[#1,knotstyle] (\tilesize,.5*\tilesize) to[out=180,in=90] (.5*\tilesize,0);
  \end{scope}
}}
\newcommand{\tileiii}[1][{}]{\tikz{
  \draw[tilestyle] (0,0) rectangle (\tilesize,\tilesize);
  \begin{scope}\clip (0,0) rectangle (\tilesize,\tilesize);
    \draw[#1,knotstyle] (\tilesize,.5*\tilesize) to[out=180,in=-90] (.5*\tilesize,\tilesize);
  \end{scope}
}}
\newcommand{\tileiv}[1][{}]{\tikz{
  \draw[tilestyle] (0,0) rectangle (\tilesize,\tilesize);
  \begin{scope}\clip (0,0) rectangle (\tilesize,\tilesize);
    \draw[#1,knotstyle] (0,.5*\tilesize) to[out=0,in=-90] (.5*\tilesize,\tilesize);
  \end{scope}
}}
\newcommand{\tilev}[1][{}]{\tikz{
  \draw[tilestyle] (0,0) rectangle (\tilesize,\tilesize);
  \begin{scope}\clip (0,0) rectangle (\tilesize,\tilesize);
    \draw[#1,knotstyle] (0,.5*\tilesize) -- (\tilesize,.5*\tilesize);
  \end{scope}
}}
\newcommand{\tilevi}[1][{}]{\tikz{
  \draw[tilestyle] (0,0) rectangle (\tilesize,\tilesize);
  \begin{scope}\clip (0,0) rectangle (\tilesize,\tilesize);
    \draw[#1,knotstyle] (.5*\tilesize,0) -- (.5*\tilesize,\tilesize);
  \end{scope}
}}
\newcommand{\tilevii}[1][{}]{\tikz{
  \draw[tilestyle] (0,0) rectangle (\tilesize,\tilesize);
  \begin{scope}\clip (0,0) rectangle (\tilesize,\tilesize);
    \draw[#1,knotstyle] (0,.5*\tilesize) to[out=0,in=90] (.5*\tilesize,0);
    \draw[#1,knotstyle] (\tilesize,.5*\tilesize) to[out=180,in=-90] (.5*\tilesize,\tilesize);
  \end{scope}
}}
\newcommand{\tileviii}[1][{}]{\tikz{
  \draw[tilestyle] (0,0) rectangle (\tilesize,\tilesize);
  \begin{scope}\clip (0,0) rectangle (\tilesize,\tilesize);
    \draw[#1,knotstyle] (\tilesize,.5*\tilesize) to[out=180,in=90] (.5*\tilesize,0);
    \draw[#1,knotstyle] (0,.5*\tilesize) to[out=0,in=-90] (.5*\tilesize,\tilesize);
  \end{scope}
}}
\newcommand{\tileix}[1][{}]{\tikz{
  \draw[tilestyle] (0,0) rectangle (\tilesize,\tilesize);
  \begin{scope}\clip (0,0) rectangle (\tilesize,\tilesize);
    \draw[#1,knotstyle] (.5*\tilesize,0) -- (.5*\tilesize,.5*\tilesize-\tilesize*\cspace);
    \draw[#1,knotstyle] (.5*\tilesize,.5*\tilesize+\tilesize*\cspace) -- (.5*\tilesize,\tilesize);
    \draw[#1,knotstyle] (0,.5*\tilesize) -- (\tilesize,.5*\tilesize);
  \end{scope}
}}
\newcommand{\tilex}[1][{}]{\tikz{
  \draw[tilestyle] (0,0) rectangle (\tilesize,\tilesize);
  \begin{scope}\clip (0,0) rectangle (\tilesize,\tilesize);
    \draw[#1,knotstyle] (0,.5*\tilesize) -- (.5*\tilesize-\tilesize*\cspace,.5*\tilesize);
    \draw[#1,knotstyle] (.5*\tilesize+\tilesize*\cspace,.5*\tilesize) -- (\tilesize,.5*\tilesize);
    \draw[#1,knotstyle] (.5*\tilesize,0) -- (.5*\tilesize,\tilesize);
  \end{scope}
}}
\newcommand{\AllyThanks}[1]{%
   \thanks{%
      Department of Mathematics; %
      University of California, Davis; %
      One Shields Ave; %
      Davis, CA, 95616, USA; %
      \href{mailto:anagasawahinck@ucdavis.edu}{anagasawahinck@ucdavis.edu}; %
      \url{}. %
   #1%
   }%
}
\newcommand{\PeytonThanks}[1]{%
   \thanks{%
      Department of Mathematics; %
      University of California, Davis; %
      One Shields Ave; %
      Davis, CA, 95616, USA; %
      \href{mailto:ppwood@ucdavis.edu}{ppwood@ucdavis.edu}; %
      \url{https://sites.google.com/view/peytonpwood/}. %
   #1%
   }%
}
\newtcolorbox{commentboxally}{
  colback=red!10,     
  colframe=black!80,    
  boxrule=0.8pt,         
  arc=4pt,               
  left=6pt, right=6pt,   
  top=4pt, bottom=4pt,   
  fontupper=\small\itshape, 
}
\newtcolorbox{commentboxpeyton}{
  colback=teal!10,     
  colframe=black!80,    
  boxrule=0.8pt,         
  arc=4pt,               
  left=6pt, right=6pt,   
  top=4pt, bottom=4pt,   
  fontupper=\small\itshape, 
}
\title{Spherical Knot Mosaics}
\author{%
   Ally Nagasawa-Hinck
   \AllyThanks{}%
   \and%
   Peyton Phinehas Wood%
   \PeytonThanks{}%
}
\date{\today}
\begin{document}

\maketitle

\vspace{-1cm}

\begin{abstract}
In this paper we introduce the notion of a spherical knot mosaic where a knot is represented by tiling the surface of a topological $S^2$ with 11 canonical knot mosaic tiles and show this gives rise to several novel knot (and link) invariants: the spherical mosaic number, spherical tiling number, minimal spherical mosaic tiling number, spherical face number, spherical $n$-mosaic face number, and minimal spherical mosaic face number. We show examples where this framework offers an improvement over classical knot mosaics. Furthermore, we explore several bounds involving classical knot invariants derived from these spherical mosaic invariants.
\end{abstract}

KEYWORDS: knot, knot mosaic, spherical knot mosaic, spherical mosaic number, spherical tiling number, spherical face number
\vspace{-0.5cm}
\section{Introduction}

A \textit{tame topological knot}, which we will refer to as a {\em knot} or {\em classical knot} throughout this paper, is a topological embedding of $S^1$ into $S^3$ which is ambient isotopic to a polygonal or smooth embedding of $S^1$ into $S^3$. Knots can be studied using knot diagrams. Knot mosaics which we will sometimes refer to as {\em classical knot mosaics} impose further restrictions on knot diagrams by having one represent a knot diagram using eleven $1 \times 1$ mosaic tiles on an $n \times n$ grid. This additional structure allows one to define knot invariants such as the mosaic number. We discuss this background with greater detail in \autoref{preliminaries}.

 In \autoref{spherical} we extend the notion of knot mosaics by introducing \textit{spherical knot mosaics} where one considers a knot projected on $S^2 \subset \mathbb{R}^3$ which we represent as the boundary of a cube of side length $n$ where each face is an $n \times n$ grid tiled by $1 \times 1$ mosaic tiles and faces connect across adjacent edges. We also define the {\em planar representation} of a spherical knot mosaic making this an easier object to study. Examples of planar representations for each of the first 35 unoriented prime knots can be found in \autoref{app:planar}.

In \autoref{invariants} we introduce several novel knot invariants using spherical knot mosaics and show examples where spherical knot mosaic invariants are stronger than classical knot mosaic invariants. In \autoref{bounds} we show several bounds on our spherical knot mosaic invariants involving classical knot invariants. In particular, we prove the following theorems:

\begin{theorem}[\autoref{m(K))-2}]
    For all knots, the spherical mosaic number is less than or equal to the classical mosaic number minus two,

    \[\text{i.e. } sm(K) \le m(K)-2.\]
\end{theorem}

\begin{theorem}[\autoref{t(K)-1}]
    For all knots, the spherical tiling number is less than or equal to the classical mosaic tiling number minus one,

    \[\text{i.e. } st(K) \le t(K)-1.\]
\end{theorem}

\begin{theorem}[\autoref{crossingnumbound}]\label{crossnumbound1.3}
    If a spherical $n$-mosaic represents a knot, then it cannot have more than $6n^2-3n+1$ crossings.
\end{theorem}

After initially posting a preprint of this paper to ar$\chi$iv.org, the authors discovered a paper published by Pezzimenti, Meintel, and Shabon defining an object equivalent to a spherical knot mosaic called a {\em cubic knot mosaic} and containing an alternative proof of \autoref{crossnumbound1.3}, \cite[Theorem~3.2 p. 634]{CubicMosaics}, and a result equivalent to combining the result of \autoref{cor:ge2} with \autoref{app:planar}, \cite[Theorem~2.5 p. 634]{CubicMosaics}. We have updated this version of our paper to note wherever we have developed the same or a similar notion as \cite{CubicMosaics}.


\section{Background}\label{preliminaries}

Formally, a {\em (topological) knot type}, $\mathcal{K}$, is an equivalence class of (topological) embeddings of $S^1$ into $S^3$ under the relation of (topological) ambient isotopy. A specific, fixed embedding of $\mathcal{K}$ is a {\em knot}, $K$, i.e. \[\mathcal{K} \colonequals [K] = \{K' \in \mathcal{K} | K'\sim K\} \] An {\em ambient isotopy} between two knots is a continuous deformation through homeomorphisms of $S^3$, i.e. if $K_0, K_1: S^1 \hookrightarrow S^3$ are two knots then an ambient isotopy is a function $H_t : S^3 \to S^3$ such that $H_0(K_0) = K_0$, $H_1(K_0) = K_1$, and $H_t$ is a homeomorphism for all $t \in [0,1]$. There are similar notions for {\em smooth knot type} and {\em piecewise-linear (PL) knot type}. A (topological) knot is called {\em tame} if it is ambient isotopic to a PL or smooth knot. Otherwise, a (topological) knot is called {\em wild}. It was shown in \cite{Moise1952AffineSI} that tame topological, smooth, and PL knots are equivalent as categories and thus result in the same knot type classification.

Given a fixed embedding $K \in \mathcal{K}$ we can study the equivalence class of $K$ by considering a two dimensional regular projection called a \textit{knot diagram} where the only singularities allowed are transverse double points and each singularity is written in such a way to denote an over and under arc structure as seen in \autoref{fig:crossinginfo}. We call a knot diagram with the same knot type as $K$ a {\em representative} of $K$ or say that the diagram {\em represents} $K$. These diagrams give rise to several knot invariants such as the \textit{crossing number} of a knot, denoted $c(K)$, which is the minimum number of crossings among all possible knot diagrams representing $K$.

A {\em link} or {\em classical link} is the disjoint union of one or more knots. All knots are also one component links. Links also give rise to link diagrams. Each component of a link may either be linked with at least one other component such that all possible representations of that component overlap with one or more other components or the diagram can be split into separate non-overlapping components. For brevity's sake we will only refer to knots throughout the rest of this paper, but many of the notions discussed can also be defined for links.

\begin{figure}[ht]
    \centering
    \includegraphics[width=0.45\linewidth]{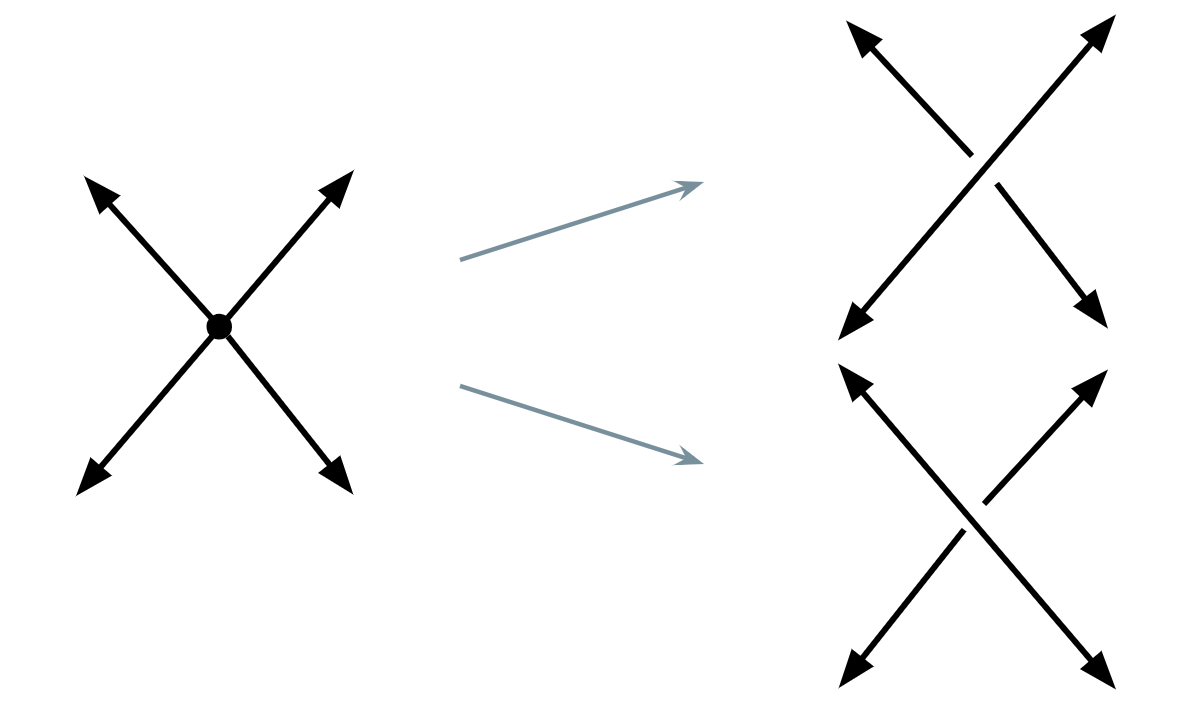}
    \caption{Showing the two ways one can add crossing information to a transverse double point of an unoriented regular projection. The arrows indicate each arc extends further into the arbitrary knot diagram and are not meant to convey an orientation.}
    \label{fig:crossinginfo}
\end{figure}

A \textit{knot $n$-mosaic} or {\em knot mosaic of size n} is an $n \times n$ grid where each entry is one of 11 \textit{mosaic tiles} depicted in \autoref{fig:tileset} arranged so that endpoints of arcs on adjacent tiles match up in such a way to result in a knot diagram. We call the set $\mathcal{T} = \{T_0, \dots, T_{10}\}$ the {\em tile set}, $T_0$ the {\em empty tile} and the other ten tiles {\em non-empty tiles}. The midpoint of an edge of a mosaic tile is called a \textit{connection point} if it is also the endpoint of an arc drawn on that tile. Each tile can have zero, two or four connection points.

One could also define an $n$-mosaic to be an $n \times n$ matrix such that each entry is one of the eleven mosaic tiles, \[\text{i.e. } M=(M_{ij})_{1 \le i,j \le n}, \hspace{.2in} M_{ij} \in \mathcal{T}.\] When the arcs on an $n$-mosaic form a single closed curve, then it is a knot $n$-mosaic \cite{Mosaic-number-of-knots}. A knot mosaic of a specific knot, $K$, can be denoted $M_K$.

\renewcommand{\knotthickness}{2.5pt}

\begin{figure}[ht]
\centering
\begin{tabular}{*{11}{c}}
\tileo & \tilei & \tileii & \tileiii & \tileiv &
\tilev & \tilevi & \tilevii & \tileviii & \tileix & \tilex \\
 $T_0$ & $T_1$ & $T_2$ & $T_3$ & $T_4$ &
$T_5$ & $T_6$ & $T_7$ & $T_8$ & $T_9$ & $T_{10}$
\end{tabular}
\caption{The 11 mosaic tiles which form the tile set}
\label{fig:tileset}
\end{figure}

In 2008, \cite{LomonacoKauffman2008} first defined knot mosaics as a discrete tool for representing quantum knots and conjectured that it would also be a useful framework for classical knots. Later that year in \cite{LOMONACO–KAUFFMAN-CONJECTURE}, it was shown that mosaic knot theory and classical knot theory are equivalent. That is, it was shown that knot mosaic type is a complete knot invariant, so two knot mosaics represent the same knot if and only if the knot diagrams belong to the same knot type. Since then, knot mosaics have been widely used to study classical knots.

\begin{figure}[!ht]
\begin{center}
\includegraphics[scale=0.65]{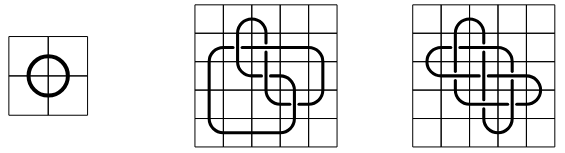}
\end{center}

\caption{Knot mosaics of the unknot (left), $4_1$ (center), and $7_4$ (right) \cite{Lee_2018}}
\label{fig:unknotmosaic}
\end{figure}

The most obvious invariant one can define with the added structure of an $n$-mosaic is the {\em mosaic number}, $m(K)$, of a knot. This is the smallest integer $n$ for which a knot can be represented as a knot $n$-mosaic. For example, the unknot, $0_1$, has mosaic number equal to 2, meaning it cannot be represented on a 1-mosaic, but can on a 2-mosaic as in \autoref{fig:unknotmosaic}. 

One can also consider the knot {\em tiling number}, $t(K)$, which is the minimal number of non-empty tiles needed to represent $K$ as a knot mosaic. If $M_K$ achieves its tiling number, it is called {\em space efficient} \cite{Heap_2018}. Closely related to these invariants is the \textit{minimal mosaic tile number}, $t_M(K)$, which is the minimum number of non-empty tiles required to construct a knot mosaic across all knot mosaic of size $m(K)$ which represent $K$. If $M_K$ both achieves its tiling number and has size $m(K)$, then it is called {\em minimally space efficient} \cite{Heap_2018}. See \cite[Theorem~8 on p. 788]{HeapKnowles2019} for proof that the tiling number of a knot can be different than the minimal mosaic tile number of a knot. We restate their example of $9_{10}$ in \autoref{tab:mosaicexamples}. The tiling number serves as a lower bound for the minimal mosaic tile number.  Since it is possible for $t(K)  < t_M(K)$ this means some knots only achieve their tiling number on a mosaic of size strictly greater than $m(K)$.

    \begin{table}[htbp]
    \centering
    
    \begin{tabular}{|c||c|c|c|c|c|}
        \hline
        & \multicolumn{5}{|c|}{Knot} \\
        
        \hline
        Invariant & $0_1$ & $3_1$ & $4_1$ & $5_1$ & $9_{10}$ \\
        \hline
        \hline
        $m(K)$ & 2 & 4 & $5^{\text{\cite{Lee_2018}}}$ & $5^{\text{\cite{Lee_2018}}}$ & $6^{\text{\cite[Thm 6]{HeapKnowles2019}}}$\\
        $t(K)$ & 4 & $12^{\text{\cite[Cor 12]{Heap_2018}}}$ & $17^{\text{\cite[Thm 13]{Heap_2018}}}$ & $17^{\text{\cite[Thm 13]{Heap_2018}}}$ &
        $27^{\text{\cite[Thm 8]{HeapKnowles2019}}}$\\
        $t_M(K)$ & 4 & $12^{\text{\cite[Cor 12]{Heap_2018}}}$ & $17^{\text{\cite[Thm 13]{Heap_2018}}}$ & $17^{\text{\cite[Thm 13]{Heap_2018}}}$ & $32^{\text{\cite[Thm 8]{HeapKnowles2019}}}$\\
        \hline
    \end{tabular}
    \caption{The mosaic number, tiling number, and minimal mosaic tile number for $0_1$, $3_1$, $4_1$, $5_1$, and $9_{10}$ with appropriate citations}
    \label{tab:mosaicexamples}
\end{table}

\vspace{-.2in}
\section{Spherical Knot Mosaics}\label{spherical}
We extend the notion of a classical mosaic on an $n \times n$ grid by tiling the faces of an $n \times n \times n$ cube embedded as $[0,n]\times[0,n]\times[0,n] \subset \mathbb{R}^3$. Under this notion each face of the cube is an $n \times n$ grid we tile using the same tile set as classical knot mosaics. Since topologically, a cube is homeomorphic to $S^2$, we call such a tiling a \textit{spherical $n$-mosaic} or {\em spherical mosaic of size n}. When the tiling results in a single closed component, we call it a \textit{spherical knot $n$-mosaic}. A spherical knot mosaic representing $K$ can be denoted $SM_K$.

\begin{figure}[ht]
    \centering
    \includegraphics[scale=.8]{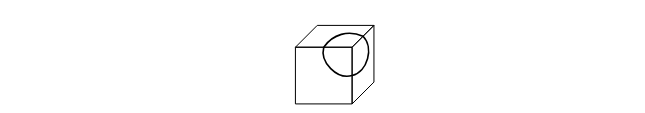}
    \caption{An example of the unknot as a spherical 1-mosaic}
    \label{fig:3DCube}
\end{figure}

Since it is both onerous and unenlightening to display more complicated 3D spherical knot mosaics in a 2D medium, we will break down each cube into a \textit{planar representation of $S^2$}. This visualization will be formed by six $n \times n$ grids arranged in a $T$ shape with the appropriate edges identified.
\vspace{-.2in}
\begin{figure}[!ht]
    \centering
    \vspace{.5cm}
    \includegraphics[width=0.2\linewidth]{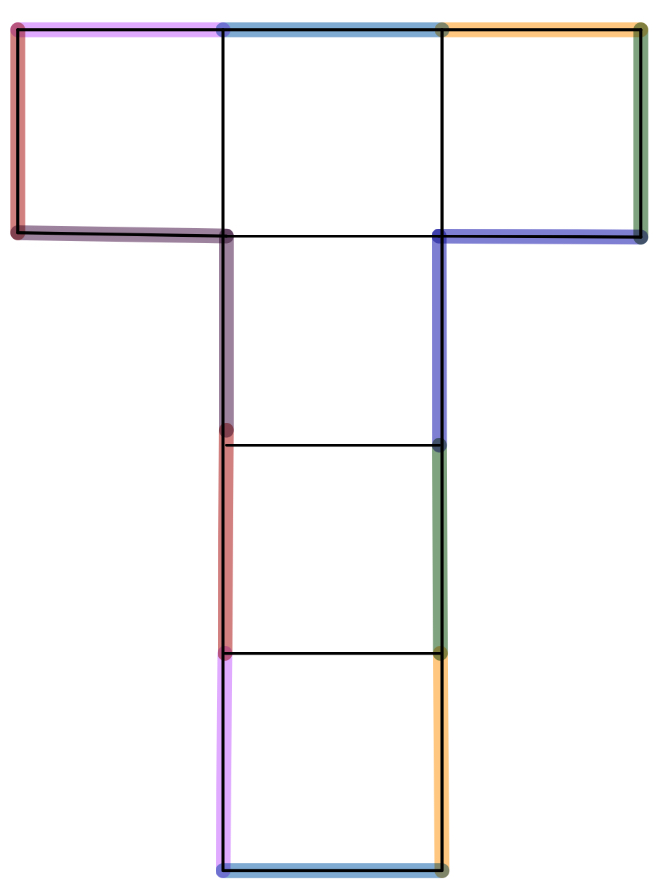}
    \caption{Planar representation of the cube with identified edges color-coded}
    \label{fig:cm41}
\end{figure}

We now have a variety of options for representing the unknot as a spherical knot 1-mosaic, some of which are seen in \autoref{fig:unknotplanar}. Examples of planar representations of the first 35 non-trivial knots with crossing number less than or equal to 8 can be found in \autoref{app:planar}.

\begin{figure}[!ht]
    \begin{center}
    \includegraphics[scale=0.8]{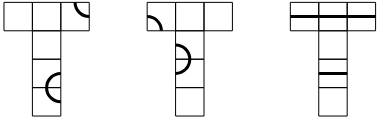}
    \end{center}
    \vspace{-.2in}
    \caption{Three examples of a planar representation of the unknot}
    \label{fig:unknotplanar}
\end{figure}

\begin{remark}
    It is interesting to point out that it is necessary we are referring to topological knots instead of PL or smooth knots throughout the rest of this paper as both PL and smooth can run into obstructions when trying to represent them as spherical knot mosaics. Since, as stated previously, topological, smooth, and PL knot theory are equivalent, we know these obstructions can all be dealt with, but as presented herein spherical knot mosaics will rarely represent a PL or smooth knot without some modification required.

    In particular, a spherical knot mosaic using any of $T_1, T_2, T_3, T_4, T_7, \text{ or } T_8$ from the tile set in \autoref{fig:tileset} does not represent a PL knot since there will be at least one tile that has a curved arc instead of a piecewise linear arc. However, the tile set can be easily modified so that these 6 tiles and thus any knot mosaic made with the modified tile set are PL.

    Classical knot mosaics represent both topological and smooth knots without any modification required, but the added structure of spherical knot mosaics disallow for smooth knots in most cases. Specifically, if $SM_K$ has non-empty tiles on 2 or more faces, then an arc will be connected across at least one edge of the cube. At these edges we see continuous, but non-differentiable points in our knot diagrams. One could resolve this issue by rounding the edges of the cube to recover smoothness.\\
\end{remark}

\section{Spherical Knot Mosaic Invariants}\label{invariants}

We now introduce several novel knot invariants arising from the structure of spherical knot mosaics. Our first novel knot invariant is an extension of the mosaic number. We define the {\em spherical mosaic number} of a knot, $K$, denoted $sm(K)$, to be the smallest integer, $n$, for which $K$ can be represented as a spherical knot $n$-mosaic. This is an equivalent notation to the cubic mosaic number in \cite[Definition 2.2 p. 632]{CubicMosaics}. We see in \autoref{app:planar} that the spherical mosaic number for the trefoil and figure-eight knot are 1 and that $sm(K) \le 2$ for all knots with $c(K) \le 8$. We establish that the unknot, trefoil, and figure-eight knots are the only knots with spherical mosaic number 1 in \autoref{cor:ge2} and thus all knots with crossing number 5 through 8 have $sm(K)=2$. These results are also shown in \cite[Proposition 2.3  p. 633]{CubicMosaics} and \cite[Theorem 2.5 p. 634]{CubicMosaics}.

Thus the spherical mosaic number distinguishes $4_1$ and $5_1$ since $sm(4_1)=1$ and $sm(5_1)=2$. We see in \autoref{tab:mosaicexamples} that the mosaic number, tiling number, and minimal mosaic tile number all fail to distinguish these two knots.

Our second novel invariant coming from spherical knot mosaics is an extension of the mosaic tiling number. Define the {\em spherical tiling number} of a knot, $K$, denoted $st(K)$, to be the smallest integer, $t$, for which $K$ can be represented as a spherical knot $n$-mosaic with $t$ non-empty mosaic tiles. One should note that this invariant does not require that the size of the mosaic $n$ be equal to $sm(K)$ when $st(K)$ tiles are used. Requiring such a restriction yields a third novel invariant based on the minimal mosaic tile number. We define the {\em minimal spherical mosaic tile number} of a knot, $K$, denoted $st_M(K)$ to be the smallest integer, $t_M$, for which $K$ can be represented as a spherical knot $sm(K)$-mosaic with $t_M$ non-empty mosaic tiles.

\begin{proposition}
    For a knot, $K$, $st(K) \le st_M(K)$.
\end{proposition}
    \begin{proof}
        This follows directly from the definition.
    \end{proof}

 As stated earlier, it is known that there are knots which have different tiling numbers and minimal mosaic tiling numbers \cite{HeapKnowles2019}. It is quite possible this is also the case for spherical knot mosaics, however finding such an example may prove to be harder than it was for classical mosaics given how efficiently spherical mosaics use space compared to classical mosaics.

Three additional novel invariants which do not arise from classical knot mosaic invariants are the minimal number of faces of a cube required to represent a knot, the minimal number of faces of a cube to represent a knot for a fixed $n$ and the minimal number of faces required to represent a knot when $n=sm(K)$. Define the {\em spherical face number} of a knot, $K$, denoted $sf(K)$ to be the smallest integer, $f$, for which $K$ can be represented as a spherical knot $n$-mosaic with $f$ non-empty faces. This is a highly ineffective invariant given that it will be equal to 1 for all knots since every knot can be represented on an $n$-mosaic for $n \ge m(K)$ and thus fit on one face of a spherical $n$-mosaic for $n \ge m(K)$ as well.

Define the {\em spherical $n$-mosaic face number} of a knot, $K$, and fixed $n$, denoted $sf_n(K)$, to be the smallest integer, $f_n$, where $K$ can be represented as a spherical knot $n$-mosaic with $f_n$ non-empty faces. Note this will be undefined when $n < sm(K)$. This definition also implies if $sm(K)=1$, meaning if $K$ is the unknot, trefoil, or figure eight knot, then $st(K)=sf_1(K)$.

\vspace{.4in}
\begin{proposition}\label{prop:whensf_ntrivial}
    For a knot $K$, if $n \ge m(K)$, then $sf_n(K) = 1$.
\end{proposition}
    \begin{proof}
        As each face of our spherical $n$-mosaic is at least an $m(K)$-mosaic by assumption, there exists a planar mosaic representation of $K$ which can be glued to any $m(K) \times m(K)$ sub-mosaic of any face of our spherical $n$-mosaic. Thus, $sf_n(K) = 1$.
    \end{proof}

 \begin{remark}
    Proposition \ref{prop:whensf_ntrivial} means $sf_n(K)$ will only be a stronger invariant than $sf(K)$, and thus, nontrivial, when fixing an $n$ such that $m(K) > n \ge sm(K)$. We show that one can always choose such an $n$, that is, such an $n$ will exist that yields a valid spherical knot $n$-mosaic in \autoref{m(K))-1} for all knots and get a tighter bound for nontrivial knots in \autoref{m(K))-2}.
 \end{remark}

 Define the {\em minimal spherical mosaic face number} of a knot $K$, denoted $sf_M(K)$, to be the smallest integer, $f_M$, for which $K$ can be represented as a spherical knot $n$-mosaic with $f_M$ non-empty faces where $n=sm(K)$.

\begin{example} Let's see some of the values these invariants take for the first five knots.
    \begin{table}[htbp]
    \centering
    
    \begin{tabular}{|c||c|c|c|c|c|}
        \hline
         & \multicolumn{5}{|c|}{Knot} \\
        \hline
        Invariant & $0_1$ & $3_1$ & $4_1$ & $5_1$ & $5_2$\\
        \hline
        \hline
        $sm(K)$   & 1 & 1 & 1 & 2 & 2\\
        $st(K)$   & 3 & 6 & 6 & $\le 14$ & $\le 14$\\
        $st_M(K)$ & 3 & 6 & 6 & $\le 14$ & $\le 14$\\
        $sf(K)$   & 1 & 1 & 1 & 1 & 1\\
        $sf_1(K)$ & 3 & 6 & 6 & - & -\\
        $sf_2(K)$ & 1 & 3 &$\le5$ & $\le 5$ & $\le 5$\\
        $sf_3(K)$ & 1 & 3 & 3 & 3 & 3\\
        $sf_M(K)$ & 3 & 6 & 6 & $\le 5$ & $\le 5$\\
        \hline
    \end{tabular}
    \caption{Some spherical mosaic theoretic invariants for $0_1$, $3_1$, $4_1$, $5_1$, and $5_2$. We use - to denote undefined. We give upper bounds where the exact value is not yet known.}
    \label{tab:unknottable}
\end{table}
\end{example}

\begin{proposition}
    For any knot, $K$, $sf_M(K) = sf_{sm(K)}(K)$, and $ 1 \le sf_{n}(K)\le sf_M(K)$ for all $n \ge sm(K)$.
\end{proposition}
\begin{proof}
    This follows directly from the definitions.
\end{proof}

This means that one can always represent a knot on a spherical $n$-mosaic for any $n$ larger than the minimum necessary and when doing so the minimal number of faces required may and eventually will decrease to 1 for large enough $n$. One could also consider a tuple made up of any number of these invariants. This will also be a knot invariant which may be stronger than any of these novel invariants alone.


\section{Bounds on Spherical Knot Mosaic Invariants}\label{bounds}

In the above section we noted several bounds that naturally arise between our novel invariants. In this section we will explore bounds that arise between our invariants and preexisting knot invariants such as mosaic number, tiling number, and crossing number.

\begin{proposition}\label{sm(K)<=m(K)}
    Given a knot, $K$, $sm(K) \le m(K)$.
\end{proposition}
\begin{proof}
    Consider a knot, $K$, and its corresponding minimal $m(K) \times m(K)$ mosaic. Copy this mosaic onto any face of a spherical $m(K)$-mosaic as in \autoref{fig:sm(K)<=m(K)}. Thus, $m(K)$ is an upper bound for $sm(K)$. 
\end{proof}
\vspace{-.4in}
\begin{figure}[ht]
\centering
\begin{minipage}[t]{0.18\textwidth}
\centering
\includegraphics[width=0.5\linewidth]{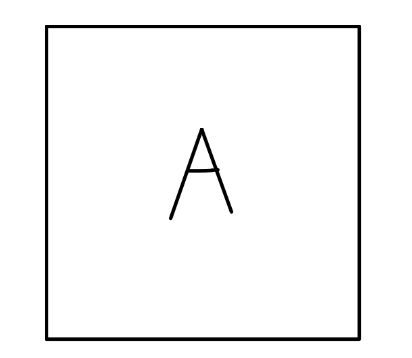}
\par\smallskip
{\footnotesize\textit{Planar $n$-mosaic $A$ representing some knot $K$}}
\end{minipage}
\hspace{0.4cm}
\begin{minipage}[t]{0.18\textwidth}
\centering
\includegraphics[width=0.5\linewidth]{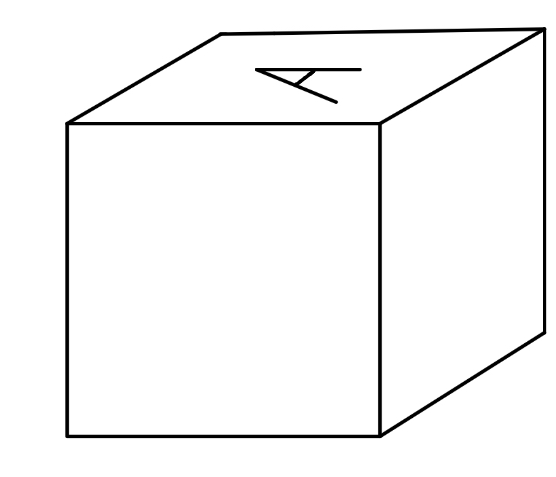}
\par\smallskip
{\footnotesize\textit{Spherical $n$-mosaic which also represents knot $K$}}
\end{minipage}
\caption{Visual depiction of Proposition \ref{sm(K)<=m(K)}}
\label{fig:sm(K)<=m(K)}
\end{figure}

This first bound is obvious from the definition and can be improved upon. We next show that this bound is not sharp.

\begin{theorem}\label{m(K))-1}
    Given a knot, $K$, $sm(K) \le m(K)-1$.
\end{theorem}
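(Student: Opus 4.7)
The plan is to construct a spherical $(m(K)-1)$-mosaic of $K$ directly from a minimal knot $m(K)$-mosaic $M$ by wrapping $M$ around three adjacent faces of an $(m(K)-1)$-cube. Set $n = m(K)$. The construction exploits the fact that $M$ has no connection points on its outer boundary, which severely constrains the tiles on that boundary; in particular, the corner tile $M(n,n)$ can only be the empty tile $T_0$ or the unique corner tile whose two connection points are on its top and left edges, namely $T_4$ in Figure \ref{tiles}.

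First I would label the cube's faces Front, Back, Left, Right, Up, Down, each an $(n-1) \times (n-1)$ grid, and place tiles as follows: put $M(i,j)$ for $1 \le i, j \le n-1$ on the Front face at position $(i,j)$; put $M(i,n)$ for $1 \le i \le n-1$ on the Right face as its leftmost column; put $M(n,j)$ for $1 \le j \le n-1$ on the Down face as its topmost row. All other positions on the cube are set to $T_0$, and the corner tile $M(n,n)$ is deliberately not placed.

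Next I would verify the adjacency constraints of a spherical mosaic. The Front--Right and Front--Down cube-edge identifications recover exactly the adjacencies already present in $M$ between neighboring columns and rows, so they hold automatically. The identifications involving the empty faces Back, Left, Up correspond to the outer boundary of $M$, which carries no connection points, so they also hold. The only delicate identification is along the Right--Down cube edge at the Front--Right--Down vertex of the cube, where the bottom edge of $M(n-1,n)$ (placed on Right) is glued directly to the right edge of $M(n,n-1)$ (placed on Down); in $M$ these two edges were originally joined through the missing tile $M(n,n)$.

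The hard part is showing this last gluing is consistent with the knot type, and it is precisely here that the restriction $M(n,n) \in \{T_0, T_4\}$ is decisive. If $M(n,n) = T_0$, both glued edges carry no connection points and the identification is trivially valid. If $M(n,n) = T_4$, both edges carry a connection point at their midpoint, and the gluing joins these two points directly; topologically, the short elbow curve that sat inside $T_4$ is replaced by a curve passing through the cube vertex, which is an ambient isotopy on $S^2$ and therefore preserves $K$. The resulting object is a valid spherical knot $(n-1)$-mosaic representing $K$, yielding $sm(K) \le m(K) - 1$.
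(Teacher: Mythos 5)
Your proposal is correct and follows essentially the same route as the paper's proof: both delete the (necessarily $T_0$ or corner-arc) corner tile of a minimal mosaic and wrap the remaining last row and column onto two adjacent faces of the $(m(K)-1)$-cube, with the new Right--Down edge identification directly joining the two connection points that formerly met the deleted tile. Your version is somewhat more precise in pinning down the corner tile as $T_0$ or $T_4$ and in spelling out the edge identifications near the shared cube vertex, but the underlying construction is identical.
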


\begin{proof}
    Consider a knot, $K$, and any corresponding  $m(K) \times m(K)$ mosaic. Now consider any of the 4 possible $m(K)-1 \times m(K)-1$ sub-mosaics. This will contain all but $2m(K)-1$ boundary tiles of the original mosaic. Give this sub-mosaic onto any face of a spherical $(m(K)-1)$-mosaic and consider the remaining $2m(K)-1$ tiles which are now hanging off two adjacent sides of our spherical $(m(K)-1)$-mosaic.
    These two adjacent sides will have a shared corner tile. Delete it from the mosaic and glue the remaining $2m(K)-2$ non-corner boundary tiles to our spherical $(m(K)-1)$-mosaic tiles on the appropriate adjacent cube faces.
    
    The deleted tile will either be $T_0$ or one of $T_1$ through $T_4$. In either case, this operation of deleting the corner and mapping the remaining tiles to appropriate adjacent tiles on our cube preserves the knot type and closed-ness of our knot diagram. There is a depiction of this in figure 10.  Therefore, we have successfully created a spherical knot $(m(K)-1)$-mosaic representing $K$ showing that $sm(K) \le m(K) - 1$.
\end{proof}

\begin{figure}[ht]
\centering
\begin{tabular}{*{11}{c}}
\tileo & \tilei & \tileii & \tileiii & \tileiv &
\tilev & \tilevi & \tilevii & \tileviii & \tileix & \tilex \\
 $T_0$ & $T_1$ & $T_2$ & $T_3$ & $T_4$ &
$T_5$ & $T_6$ & $T_7$ & $T_8$ & $T_9$ & $T_{10}$
\end{tabular}
\caption{We repeat \autoref{fig:tileset} for the reader to reference.}
\label{fig:tilesetrepeat}
\end{figure}

This bound is sharp for the unknot, but otherwise can be improved further. We next show how the main idea in the proof of \autoref{m(K))-1} can be extended to get an even better bound for all non-trivial knots.

\begin{theorem}\label{m(K))-2}
    Given a nontrivial knot, $K$, $sm(K) \le m(K)-2$.
\end{theorem}

\begin{proof}
    Let \(K\) be a knot with mosaic number \(m(K) > 2\), and let \(M_K\) be a planar mosaic representation of \(K\) on an \(m(K) \times m(K)\) grid. In a planar knot mosaic, all tiles with 4 connection points must be located within the interior \((m(K)-2) \times (m(K)-2)\) sub-mosaic to ensure suitable connectivity, so the remaining $4m(K)-4$ tiles on the boundary of our mosaic can be $T_0$ through $T_6$, but not any of $T_7$ through $T_{10}$.

Map the interior \((m(K)-2) \times (m(K)-2)\) sub-mosaic to any face of a spherical $(m(K)-2)$-mosaic as in \autoref{fig:m(K)-2}. There will be $4m(K)-4$ tiles hanging off the sides of our spherical $(m(K)-2)$-mosaic. We can remove all corner tiles and glue the remaining boundary tiles to the spherical $(m-2)$-mosaic by considering two cases:

\textit{Case 1:} If a corner tile of \(M_K\) is $T_0$, then we can remove it and preserve the knot type and closed-ness of our knot diagram by gluing the non-corner boundary tiles on the appropriate adjacent cube faces.

\textit{Case 2:} If a corner of \(M_K\) is one of $T_1$ through $T_4$, then we can still remove it and glue the non-corner boundary tiles on adjacent cube faces because doing so re-closes diagram as the two connection points previously touching the corner tile will now touch each other. 

Thus, omitting the corner tile and gluing the non-corner boundary tiles to the appropriate adjacent cube faces will still yield a single closed component preserving the knot type of our knot diagram in all cases. Visualizations of this casework can be found in \autoref{fig:m(K)-2cases}.

\vspace{1em}

This shows we have the following upper bound on the spherical mosaic number in terms of the mosaic number:
\[
sm(K) \leq m(K) - 2.
\]
\end{proof}

\begin{figure}[ht]
    \centering

\begin{minipage}{0.28\textwidth}
\centering
\includegraphics[width=0.6\linewidth]{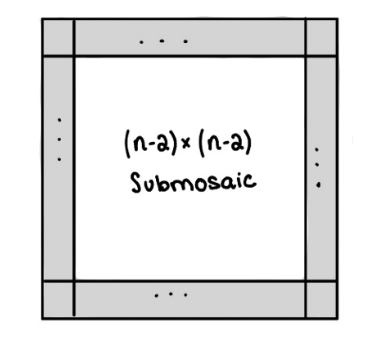}
\par\smallskip
{\footnotesize\textit{Take an $n$-mosaic and identify the $(n-2)\times(n-2)$ submosaic which must contain all crossing information.}}
\end{minipage}
\hspace{0.04\textwidth}
\begin{minipage}{0.28\textwidth}
\centering
\includegraphics[width=0.6\linewidth]{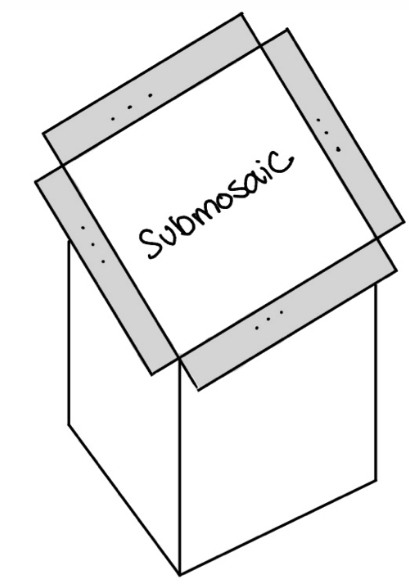}
\par\smallskip
{\footnotesize\textit{Remove the corner tiles and glue the submosaic to any face of a spherical $(n-2)$-mosaic.}}
\end{minipage}
\hspace{0.04\textwidth}
\begin{minipage}{0.28\textwidth}
\centering
\includegraphics[width=0.6\linewidth]{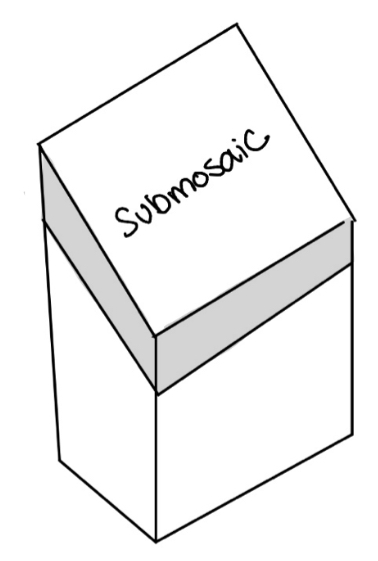}
\par\smallskip
{\footnotesize\textit{Fold and glue the remaining boundary tiles of the mosaic along adjacent faces. This preserves connectedness.}}
\end{minipage}
\caption{Visual depiction of gluing an $n$-mosaic onto a spherical mosaic.}
    \label{fig:m(K)-2}
\end{figure}

The key idea in this proof suggests that further refinements of this bound might be possible depending on the spatial distribution of the crossings and the structure of the boundary tiles. For example, it may be that a planar mosaic representation of that knot where all the $T_7$, $T_8$, $T_9$, and $T_{10}$ tiles are located in a $k \times k$ sub-mosaic has $sm(K) \le k$. However, this relies on the behavior of the paths between the connection points on the boundary of our sub-mosaic which while not having any crossings may complicate matters. This is a good direction for future research.

\begin{figure}[ht]
    \centering

\noindent
\begin{minipage}[t]{0.42\textwidth}
\textbf{Case 1}

\includegraphics[width=\linewidth]{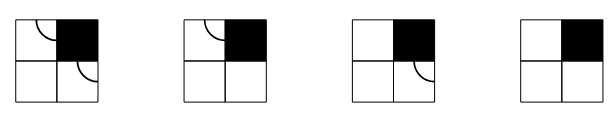}
\par\smallskip
{\footnotesize\textit{Consider the leftmost corner of a planar mosaic whose bottom left corner tile is the empty tile $T_0$. There are 4 possible such cases.}}

\vspace{0.8cm}

\includegraphics[width=\linewidth]{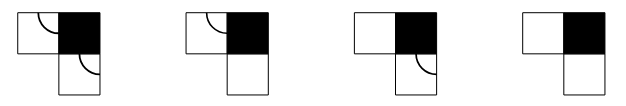}
\par\smallskip
{\footnotesize\textit{Delete the bottom left tile.}}

\vspace{0.8cm}

\includegraphics[width=\linewidth]{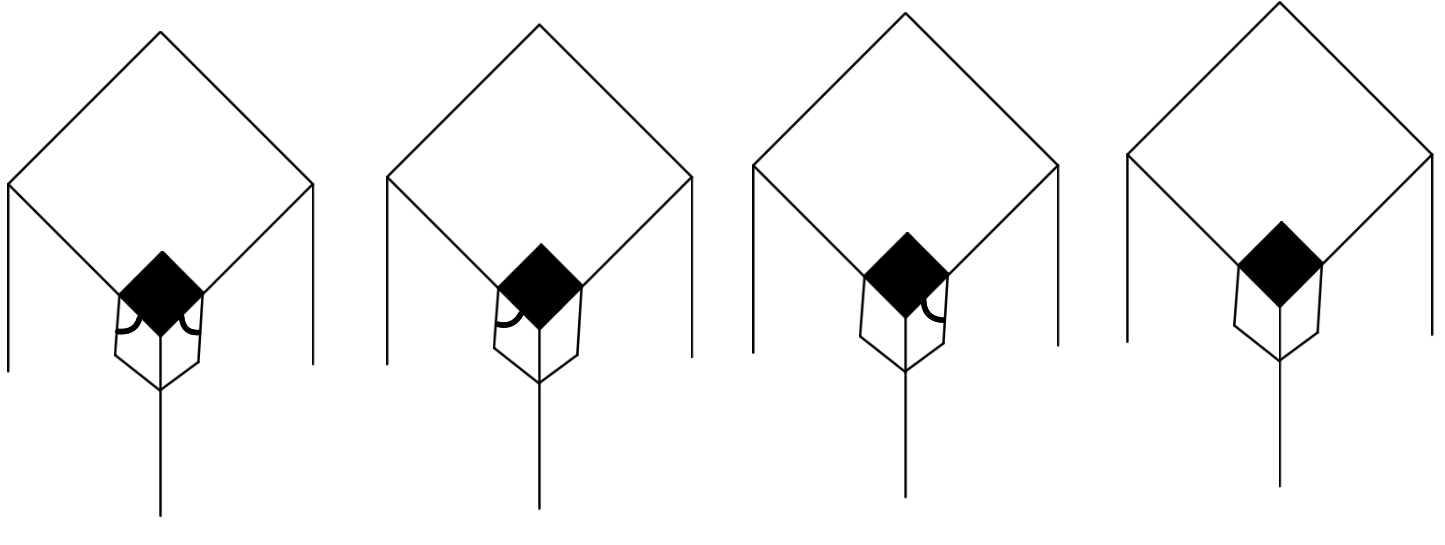}
\par\smallskip
\end{minipage}
\hfill
\begin{minipage}[t]{0.42\textwidth}
\textbf{Case 2}

\includegraphics[width=\linewidth]{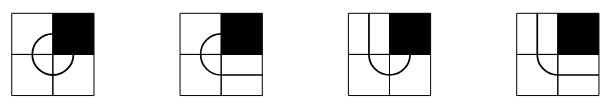}
\par\smallskip
{\footnotesize\textit{Consider the leftmost corner of a planar mosaic whose bottom left corner tile is $T_3$. There are 4 possible subcases.}}

\vspace{1.3cm}

\includegraphics[width=\linewidth]{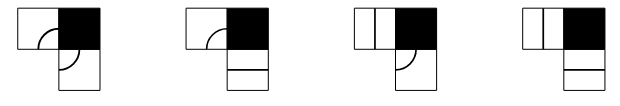}
\par\smallskip
{\footnotesize\textit{Delete the bottom left tile.}}

\vspace{0.8cm}

\includegraphics[width=\linewidth]{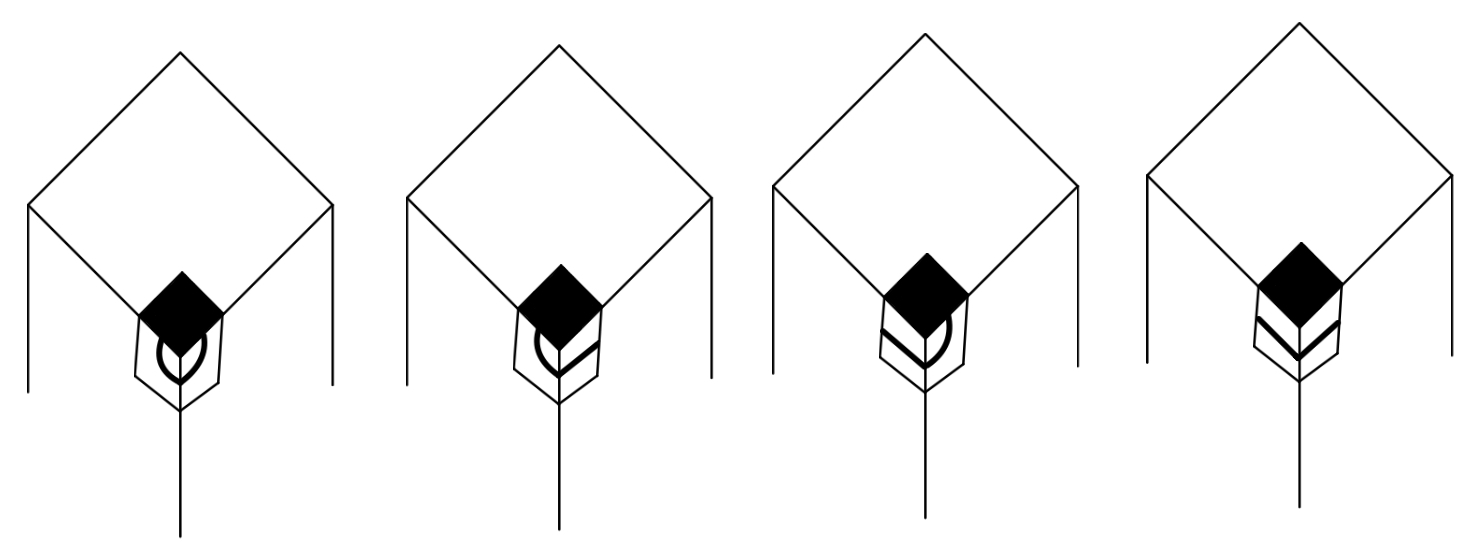}
\par\smallskip
\end{minipage}\\
{\footnotesize\textit{Glue the adjacent boundary tiles to the appropriate tiles. This preserves knot type.}}
\caption{Visual depiction of casework in \autoref{m(K))-2}. In all cases, the resultant $SM_K$ is a suitably connected mosaic that preserves the knot type of $K$. We show this for the bottom left corner, but the same logic holds for all 4 corners.}

    \label{fig:m(K)-2cases}
\end{figure}

\hspace{.5in}
\begin{theorem}\label{t(K)-1}
     Given a knot, $K$, $st(K) \le t(K)-1$.
\end{theorem}

\begin{proof}
    Consider an arbitrary knot $K$ and a corresponding planar space efficient $n$-mosaic. This must have a $T_3$ tile such that there are no non-empty tiles in the entries both below or to the left of the $T_3$ tile. Call this tile $T^*$. Such a representation will always exist but may not be a $m(K)-$mosaic. We can then represent $K$ on a spherical $(n-1)-$mosaic with $t(K)-1$ tiles by deleting $T^*$, gluing all tiles above and to the right of $T^*$ (denoted by $B$ in \autoref{fig:t(K)-1}) to the top face of the spherical $(n-1)-$mosaic such that the tile to upper right diagonal to $T^*$ is glued to the bottom left corner of the top face of the spherical $(n-1)-$mosaic, gluing all tiles in the same row or below and to the right of $T^*$ (denoted by $C$ in \autoref{fig:t(K)-1}) to the front right face of the spherical $(n-1)-$mosaic such that the tile to the right of $T^*$ is glued to the top left corner of the front right face of the spherical $(n-1)-$mosaic, and gluing all tiles above and in the same column or to the left $T^*$ tile (denoted by $A$ in \autoref{fig:t(K)-1}) to the front left face of the spherical $(n-1)-$mosaic such that the tile above the $T_3$ tile is glued to the top right corner of the front left face of the spherical $(n-1)-$mosaic. Thus we have represented $K$ as a spherical knot mosaic with $t(K)-1$ tiles. 
\end{proof}

\begin{figure}
    \centering

\begin{minipage}{0.28\textwidth}
\centering
\includegraphics[width=0.8\linewidth]{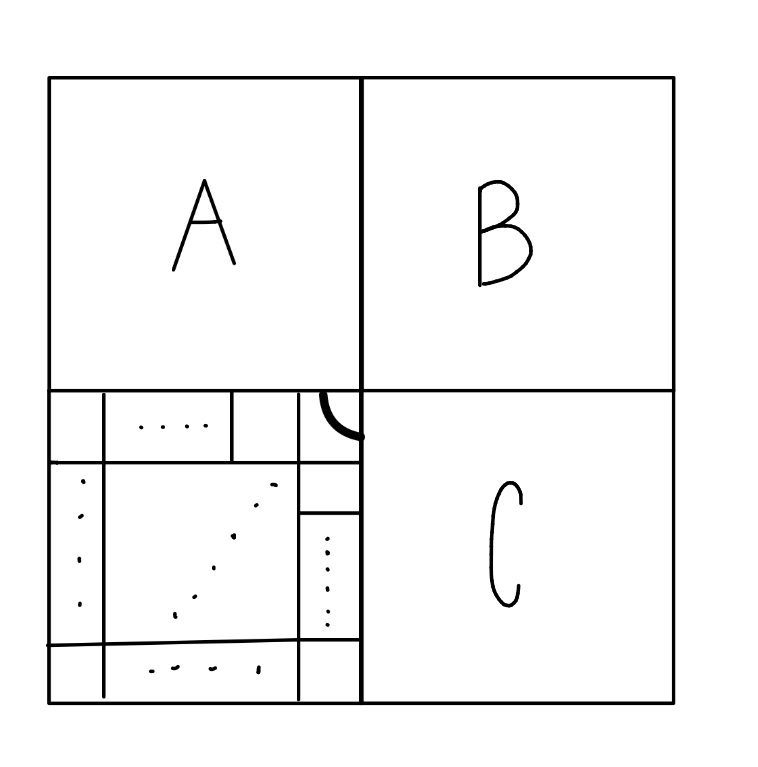}
\par\smallskip
{\footnotesize\textit{A general representation of an $n$-mosaic satisfying the conditions in \autoref{t(K)-1}.}}
\end{minipage}
\hspace{0.04\textwidth}
\begin{minipage}{0.28\textwidth}
\centering
\includegraphics[width=1.0\linewidth]{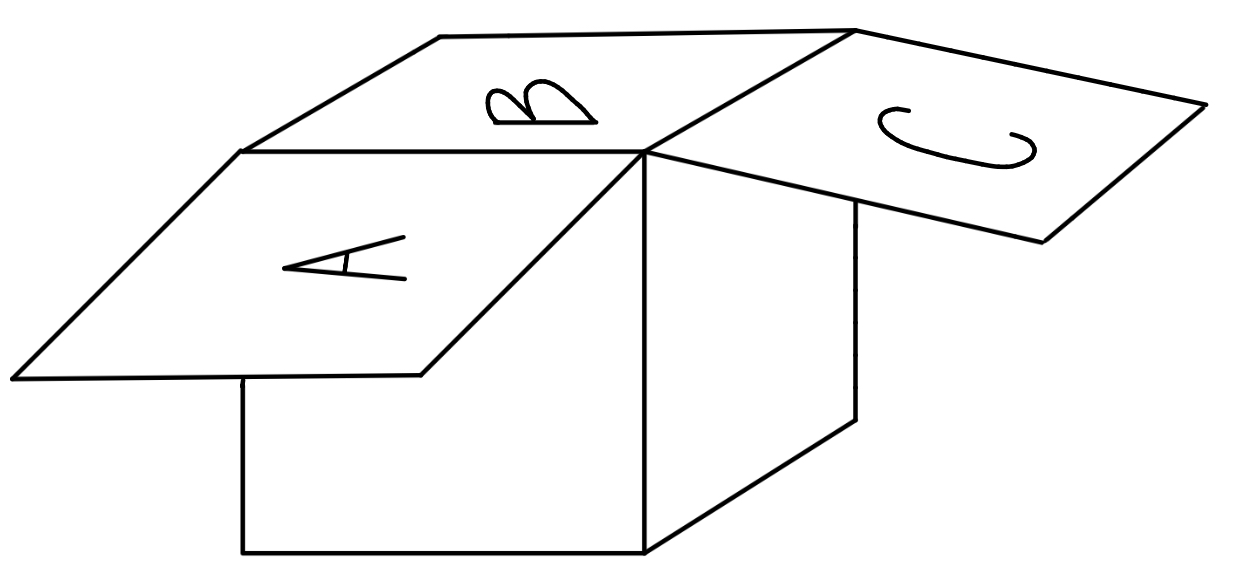}
\par\smallskip
{\footnotesize\textit{First glue $B$ to the top face of the cube, delete the bottom left portion of the mosaic which is all empty tiles except for $T^{*}$ in the top right.}}
\end{minipage}
\hspace{0.04\textwidth}
\begin{minipage}{0.28\textwidth}
\centering
\includegraphics[width=0.8\linewidth]{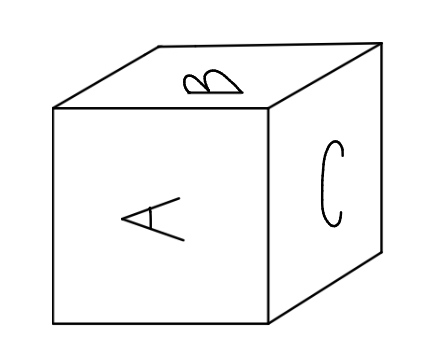}
\par\smallskip
{\footnotesize\textit{Glue $A$ to the front left face and $C$ to the front right face. This preserves knot type.}}
\end{minipage}

    \caption{Visual depiction of \autoref{t(K)-1}. Note that $A$, $B$, and $C$ do not actually need to be the same dimensions as each other, but each of $A$, $B$, and $C$ will be at most an $(n-1)$-mosaic.}
    \label{fig:t(K)-1}
\end{figure}

This bound is sharp for the unknot, but computational evidence has shown it is not for many nontrivial knots. In many cases a better bound is obvious from a space efficient knot mosaic, but for others, you cannot get a better bound directly from a space efficient knot mosaic and other methods must be employed. This is also a good direction for future research.

\begin{definition}
Let the boundary of a cube be tiled such that each face is an $n \times n$ mosaic grid, and all tiles conform to the standard mosaic tile set and connectivity rules. A \emph{mosaic belt} is a sequence of tiles satisfying the following conditions:
\begin{enumerate}
    \item Each tile in the sequence has exactly two connecting edges to adjacent tiles.
 \item Consecutive tiles in the sequence share an edge and connected through opposite edges so that the sequence forms a continuous, non-branching path with no turns or gaps.
    \item The complement of the mosaic belt is disconnected.
\end{enumerate}
\end{definition}


\begin{figure}[!ht]
    \centering
    \includegraphics[width=0.65\linewidth]{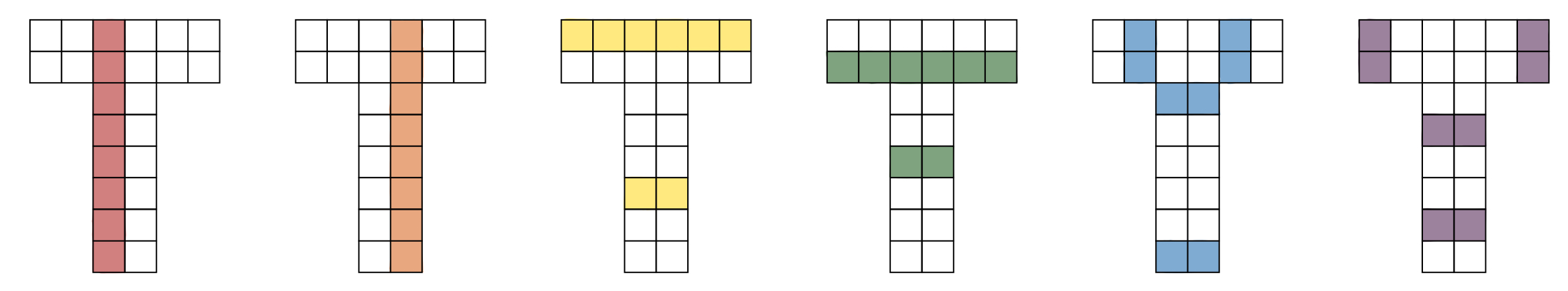}
    \caption{The six distinct mosaic belts of the empty spherical 2-mosaic}
    \label{fig:placeholder}
\end{figure}

\begin{theorem}\label{crossingnumbound}
    If $SM_K$ is a spherical knot $n$-mosaic for a knot, $K$, then the maximum number of crossing tiles in $SM_K$ is $ 6n^2-3n+1$.
\end{theorem}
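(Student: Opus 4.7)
The plan is to compare any spherical knot $n$-mosaic $M$ to a reference ``all-crossings'' spherical $n$-mosaic $M_0$, in which every tile is $T_9$ or $T_{10}$, and to argue that each non-$T_9/T_{10}$ tile in $M$ can reduce the number of link components by at most $1$ from the baseline set by $M_0$.

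First I would analyze $M_0$. The cube surface admits a canonical family of $3n$ simple closed ``grid circles'': three families of $n$ circles each, one family per pair of opposite faces of the cube, with every circle wrapping through $4$ faces and exactly $4n$ tiles. When every tile is a crossing, the horizontal and vertical strands at each tile align with these grid circles, so each grid circle becomes a complete strand loop. Hence $M_0$ represents a link with exactly $3n$ components, analogously to the way the definition of a spherical mosaic belt captures such a circle-of-straight-tiles structure.

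Next I would establish the key monotonicity inequality: if $M$ contains $d$ tiles of type $T_9$ or $T_{10}$, then the number of link components satisfies
\[
  \kappa(M) \ge 3n - (6n^2 - d),
\]
i.e., the number of link components plus the number of non-$T_9/T_{10}$ tiles is at least $3n$. The idea is a local analysis: at any tile the four edge midpoints are paired by the ``external connectivity'' coming from the rest of the mosaic, and the within-tile arcs determined by the tile type give a matching on the used midpoints. A case analysis over the admissible tile types $T_0, \ldots, T_8$ shows that replacing the baseline matching $\{WE, NS\}$ of a $T_9/T_{10}$ tile with any other admissible connection pattern changes the local contribution to the component count by at most $1$. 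Specializing to a knot ($\kappa(M) = 1$) then yields $1 \ge 3n - (6n^2 - d)$, which rearranges to $d \le 6n^2 - 3n + 1$.

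The main obstacle is the local-to-global step: tiles with fewer than four connection points (such as $T_0$ or $T_5$) impose compatibility constraints on their neighbors, so a naive tile-by-tile modification argument through intermediate valid mosaics is problematic, since an isolated single-tile swap may produce a configuration that is no longer a valid mosaic. I expect to resolve this by working directly with the strand graph of $M$ on $S^2$, comparing it to that of $M_0$ and accounting for the contribution of each non-$T_9/T_{10}$ tile without needing to pass through intermediate mosaics (for instance, by an Euler-characteristic computation on the surface or by an induction on the number of non-$T_9/T_{10}$ tiles that processes compatible ``clusters'' of changed tiles simultaneously).
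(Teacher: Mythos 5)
Your overall strategy --- compare an arbitrary spherical knot $n$-mosaic to the all-crossings reference mosaic $M_0$, observe that $M_0$ is a link of $3n$ belt components, and argue that the component count can drop by at most one per non-$T_9/T_{10}$ tile --- is the same baseline idea the paper uses. But your write-up has a genuine gap at exactly the step that carries all the weight: the ``key monotonicity inequality'' $\kappa(M) \ge 3n - (6n^2 - d)$ is stated, its naive tile-by-tile proof is (correctly) identified as broken for tiles with fewer than four connection points such as $T_0$ or $T_5$ (an isolated swap does not pass through valid mosaics, and deleting arcs is not a local operation on cycle counts the way smoothing a crossing is), and the proposed repairs --- ``an Euler-characteristic computation'' or ``an induction on compatible clusters'' --- are named but not carried out. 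As it stands this is a proof outline with its central lemma deferred, not a proof. For $T_7/T_8$ tiles the $\pm 1$ smoothing argument is fine, but you need an actual argument handling regions of $T_0$--$T_6$ tiles, e.g.\ by bounding $\kappa$ of the diagram obtained after promoting every $T_7/T_8$ back to a crossing and then dealing with the low-valence tiles as a block.

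Separately, the statement asserts that the \emph{maximum} is $6n^2 - 3n + 1$, so you also owe an existence half: a spherical knot $n$-mosaic actually attaining $6n^2-3n+1$ crossing tiles. Your proposal only addresses the upper bound. The paper's own proof is the mirror image of yours: it constructs the extremal example (start from $M_0$ with its $3n$ spherical mosaic belts and replace $3n-1$ suitably chosen crossing tiles by $T_7$ or $T_8$, each replacement merging two components, until one component remains) and then \emph{asserts} maximality without the inequality you are trying to prove. So your instinct about which direction needs real work is good, but to match the theorem you need both halves, and the half you chose is the one you have not finished.
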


\begin{proof}
We will start with a spherical $n$-mosaic containing only crossing tiles, so it is entirely tiled with $T_9$ or $T_{10}$. There are $6n^2$ total tiles. This is not a spherical knot $n$-mosaic, but instead is a spherical link $n$-mosaic with exactly $3n$ connected components as each connected component corresponds to one of the $3n$ mosaic belts. We will replace each crossing tile one at a time with $T_7$ or $T_8$ until there is only one connected component remaining.

Choose any mosaic belt. Now choose any tile on that belt and replace it with a non-crossing tile with four connection points, either $T_7$ or $T_8$. This change reduces the number of connected components in our spherical link $n$-mosaic by one and means the tile opposite the one we changed now represents a crossing of a single component instead of two disjoint components.

We can then iterate this process by choosing another crossing tile that is part of our larger connected component which does not represent the component crossing itself and changing it to a non-crossing tile with four connection points, this again reduces the number of connected components by one. We can do this until there is only one connected component and thus we have a spherical knot $n$-mosaic for some knot with $6n^2 - (3n-1)$ crossing tiles as there were originally $6n^2$ crossing tiles and we removed $3n-1$ of them. This is therefore the most crossings possible for a spherical knot $n$-mosaic.
\end{proof}

\begin{remark}
    \autoref{crossingnumbound} is an equivalent result to \cite[Theorem 3.2 p. 634]{CubicMosaics} 
\end{remark}

\begin{corollary}\label{cor:alternating}
    For each $n \in \mathbb{N}$ there is at least one knot, $K$, and representation of that knot as a spherical knot $n$-mosaic such that $K$ is alternating and $$6n^2-3n+1 \ge c(K).$$ 
\end{corollary}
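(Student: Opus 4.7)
The plan is to observe that this corollary is essentially an immediate packaging of the construction already carried out in the proof of Theorem \ref{crossingnumbound}. That proof did not merely bound the number of crossing tiles from above; it exhibited, for each $n$, an explicit spherical knot $n$-mosaic whose crossing tile count attains the claimed maximum $6n^2 - 3n + 1$. The corollary just interprets that construction as a statement about the crossing number of the resulting knot.

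More concretely, I would proceed as follows. First, I would invoke Theorem \ref{crossingnumbound} to produce a spherical knot $n$-mosaic $SM_K$ that is obtained from the all-crossing spherical $n$-mosaic by successively replacing $3n - 1$ crossing tiles with smoothing tiles $T_7$ or $T_8$ until exactly one closed component remains. By the theorem, the resulting object is a valid spherical knot $n$-mosaic with exactly $6n^2 - 3n + 1$ crossing tiles. Second, I would note that $SM_K$ determines a knot diagram of some knot $K$ on the sphere $S^2$, and after deleting a point from the interior of any empty or non-crossing tile we obtain an honest planar knot diagram of $K$ with the same number of crossings. Since by definition $c(K)$ is the minimum number of crossings over all diagrams of $K$, this single diagram already forces
\[
c(K) \;\le\; 6n^2 - 3n + 1.
\]

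Third, I would record that $K$ is alternating: every crossing in $SM_K$ comes from a tile of type $T_9$ or $T_{10}$, which by construction are arranged so that, up to a consistent choice made for each belt during the iterative replacement in Theorem \ref{crossingnumbound}, the over/under pattern alternates as one travels along the knot, exactly as in the standard alternating-grid construction. This is where the only real subtlety lies: one needs to verify that replacing crossings by smoothings on a spherical mosaic consisting entirely of $T_9$ and $T_{10}$ tiles preserves the alternating property. The cleanest way to do this is to checkerboard-color the faces of the spherical mosaic and observe that, since every remaining crossing is still of the original $T_9$ or $T_{10}$ type, the over-strand at each crossing lies consistently relative to the coloring, which is the classical criterion guaranteeing an alternating diagram.

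The main obstacle, then, is not the inequality itself, which is immediate from Theorem \ref{crossingnumbound} and the definition of $c(K)$, but rather the verification that the specific $K$ produced by the algorithmic construction is genuinely alternating; handling this carefully via the checkerboard argument makes the full corollary follow without additional work.
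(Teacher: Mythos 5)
Your proposal is correct and follows essentially the same route as the paper: both take the explicit $6n^2-3n+1$-crossing diagram from Theorem \ref{crossingnumbound}, arrange the initial all-crossing tiling to be alternating, and argue that the $T_7/T_8$ smoothings preserve alternation, so the resulting diagram witnesses $c(K)\le 6n^2-3n+1$. Your checkerboard-coloring justification of the preservation step is a slightly more careful version of what the paper simply asserts, but it is not a different argument.
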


\begin{proof}
    We modify the proof of \autoref{crossingnumbound} such that the original tiling with $6n^2$ crossing tiles alternates. That is, for each component every under crossing is preceded by and followed by an over crossing and vice versa. Now notice that replacing any crossing tile involving two components with $T_7$ or $T_8$ preserves the alternating nature of the diagram, thus after replacing $3n-1$ crossing tiles we are left with an alternating knot diagram with $6n^2-3n+1$ crossings. Note this may not be a reduced knot diagram so we do not necessarily find that this is the crossing number, but instead an upper bound for the crossing number of our knot.
\end{proof}

\begin{remark}
    One can find a stronger result in \cite[Theorem 3.3 p. 636]{CubicMosaics}. They show there will always exist a knot such that $sm(K)=n$ and $c(K)=6n^2-3n+1$.
\end{remark}

\begin{figure}[!ht]
    \centering
   \includegraphics[scale=0.8]{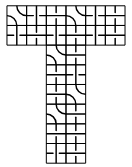}
\caption{An example of a spherical knot $2$-mosaic which has the maximum possible number of crossing tiles, $6\cdot 2^2-3\cdot 2+1 = 19$}
    \label{fig:maximal19}
\end{figure}

\begin{corollary}\label{lower-bound}
If $K$ is a knot with spherical mosaic number $sm(K)$, then $6(sm(K))^2-3(sm(K))+1 \ge c(K)$.
\end{corollary}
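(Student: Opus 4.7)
The plan is to deduce this corollary as an immediate consequence of Theorem \ref{crossingnumbound} together with the defining property of the crossing number. First, by definition of the spherical mosaic number, there exists a spherical knot $sm(K)$-mosaic $SM_K$ which represents $K$. Applying Theorem \ref{crossingnumbound} with $n = sm(K)$ tells us that the number of crossing tiles ($T_9$ or $T_{10}$) appearing in $SM_K$ is at most $6(sm(K))^2 - 3(sm(K)) + 1$.

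Next I would observe that the $T_9$ and $T_{10}$ tiles are exactly the mosaic tiles that encode a crossing of the knot diagram, and every crossing in the induced knot diagram arises from precisely one such tile. Consequently, $SM_K$ realizes a knot diagram of $K$ whose total crossing count is at most $6(sm(K))^2 - 3(sm(K)) + 1$.

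Finally, since $c(K)$ is defined as the minimum number of crossings taken over all knot diagrams of $K$, the diagram obtained from $SM_K$ provides an explicit upper witness, giving
\[
c(K) \;\leq\; 6(sm(K))^2 - 3(sm(K)) + 1,
\]
which is the desired inequality.

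There is no real obstacle here; the only point that requires a moment of care is the identification of crossings of the diagram with $T_9, T_{10}$ tiles of the mosaic, but this is immediate from the definition of the mosaic tile set in Figure \ref{tiles} since the other nine tiles contain no crossings. The corollary is therefore a direct repackaging of Theorem \ref{crossingnumbound} specialized to the case $n = sm(K)$.
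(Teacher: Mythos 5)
Your proposal is correct and matches the intended argument: the paper states this corollary without proof as an immediate consequence of Theorem \ref{crossingnumbound}, and your route (apply the theorem with $n = sm(K)$ to a realizing spherical mosaic, identify crossings with $T_9$/$T_{10}$ tiles, and invoke minimality of $c(K)$) is exactly that deduction.
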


From \autoref{lower-bound} we have the following two results.

\begin{corollary}
    Let $SM_K$ be a spherical knot $1$-mosaic for some knot $K$, then $4 \ge c(K)$.
\end{corollary}

\begin{corollary}\label{cor:ge2}
    Let $K$ be a knot such that $c(K) \ge 5$, then $sm(K) \ge 2$.
\end{corollary}

We summarize these and further results of \autoref{lower-bound} in \autoref{tab:crossingboundresults}.
    \begin{table}[htbp]
    \centering
    
    \begin{tabular}{|c|c|}
        \hline
        If $sm(K)$, & then $c(K)$\\
        \hline
        \hline
        1 & $\le 4$\\
        \hline
        2 & $\le 19$\\
        \hline
        3 & $\le 46$\\
        \hline
        4 & $\le 85$\\
        \hline
        5 & $\le 136$\\
        \hline
        6 & $\le 199$\\
        \hline
        7 & $\le 274$\\
        \hline
        8 & $\le 361$\\
        \hline
        9 & $\le 460$\\
        \hline
        10 & $\le 571$\\
        \hline
        11 & $\le 694$\\
        \hline
        12 & $\le 829$\\
        \hline
        13 & $\le 976$\\
        \hline
        14 & $\le 1135$\\
        \hline
        15 & $\le 1306$\\
        \hline
    \end{tabular}
    \hspace{.5in}
    \begin{tabular}{|c|c|}
        \hline
        If $c(K)$, & then $sm(K)$\\
        \hline
        \hline
        $\ge 5$ & $\ge 2$\\
        \hline
        $\ge 20$ & $\ge 3$\\
        \hline
        $\ge 47$ & $\ge 4$\\
        \hline
        $\ge 86$ & $\ge 5$\\
        \hline
        $\ge 137$ & $\ge 6$\\
        \hline
        $\ge 200$ & $\ge 7$\\
        \hline
        $\ge 275$ & $\ge 8$\\
        \hline
        $\ge 362$ & $\ge 9$\\
        \hline
        $\ge 461$ & $\ge 10$\\
        \hline
        $\ge 572$ & $\ge 11$\\
        \hline
        $\ge 695$ & $\ge 12$\\
        \hline
        $\ge 830$ & $\ge 13$\\
        \hline
        $\ge 977$ & $\ge 14$\\
        \hline
        $\ge 1136$ & $\ge 15$\\
        \hline
        $\ge 1307$ & $\ge 16$\\
        \hline
    \end{tabular}
    \caption{Several consequences of \autoref{lower-bound}.}
    \label{tab:crossingboundresults}
\end{table}

There are several more questions that spherical knot mosaics give rise to which the authors plan to explore in subsequent papers.

\section*{Acknowledgments}
The authors would like to thank Jennifer Schultens for her encouragement and advice throughout this project. We also thank Aaron Heap for his feedback and keen eye in catching some errors in the appendix of a previous version of this paper. The second author would like to thank Samantha Pezzimenti for introducing him to knot mosaics during the Unknot V conference at Seattle University and inspiring their work in this direction. Both authors thank Samantha Pezzimenti for her comments on and benignity of a previous version of this paper.

\bibliographystyle{plain}
\bibliography{references}

\appendix
\section{Planar Representation Examples}\label{app:planar}
Planar representations of the first 35 unoriented prime knots as spherical knot mosaics. Due to \autoref{lower-bound}, each of these planar representations realize their knot type's spherical mosaic number, but in general these may not be minimally space efficient.

\begin{center}
    \includegraphics[scale=1]{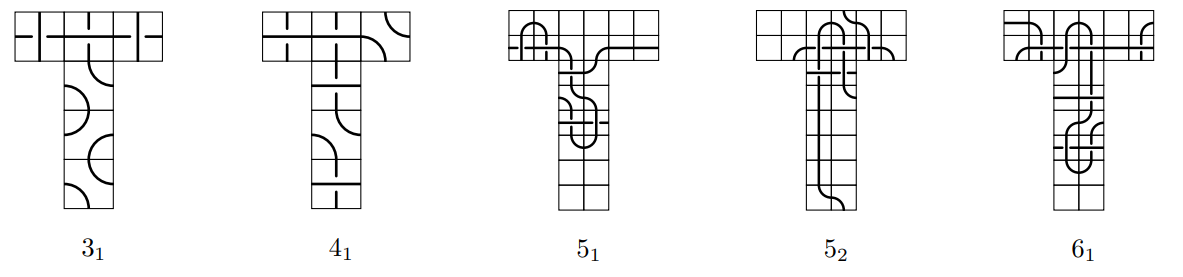}\\
    \includegraphics[scale=1]{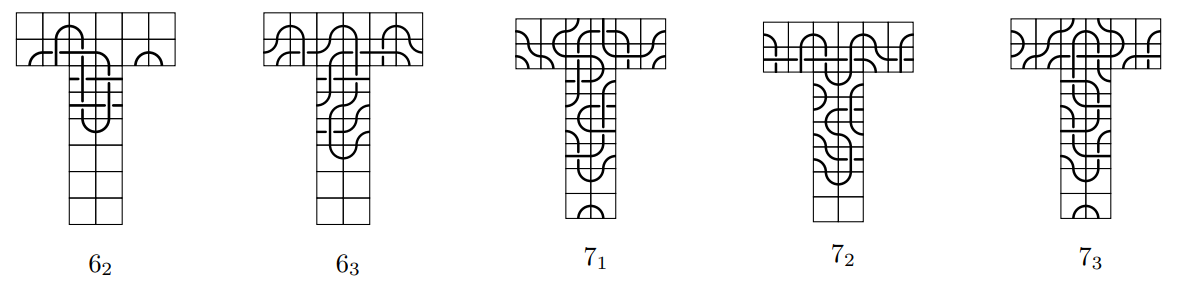}\\
    \includegraphics[scale=1]{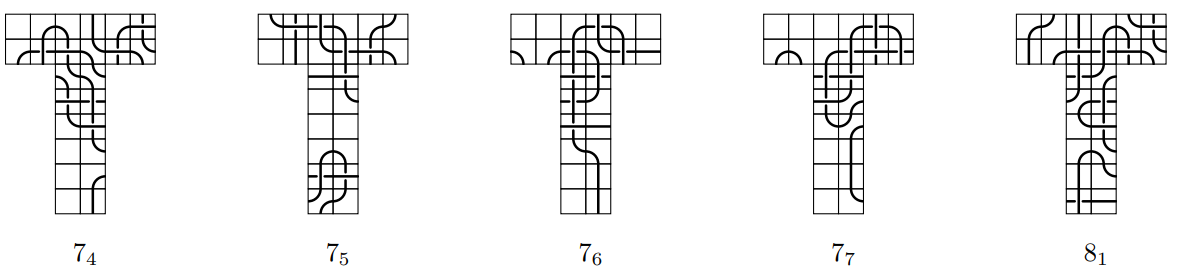}\\
    \includegraphics[scale=1]{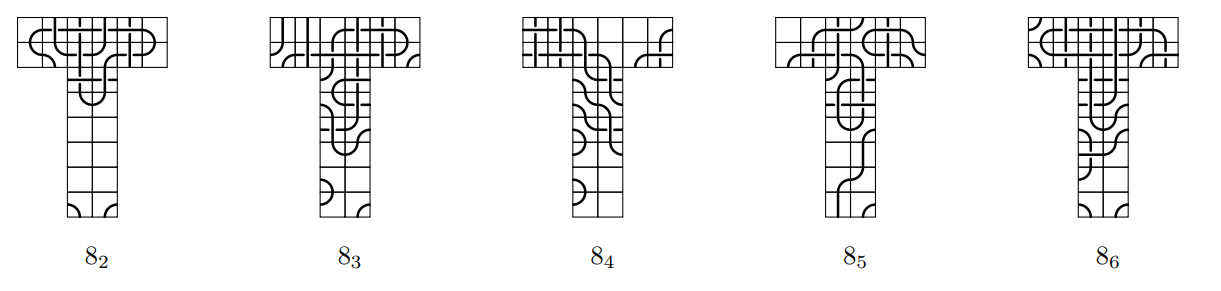}\\
    \includegraphics[scale=1]{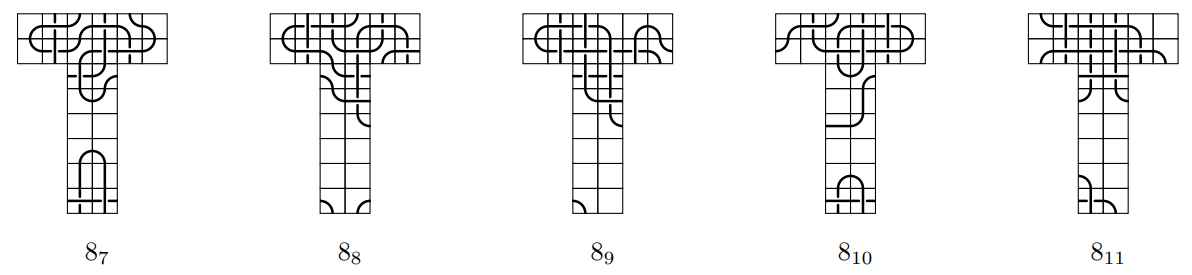}\\
    \includegraphics[scale=1]{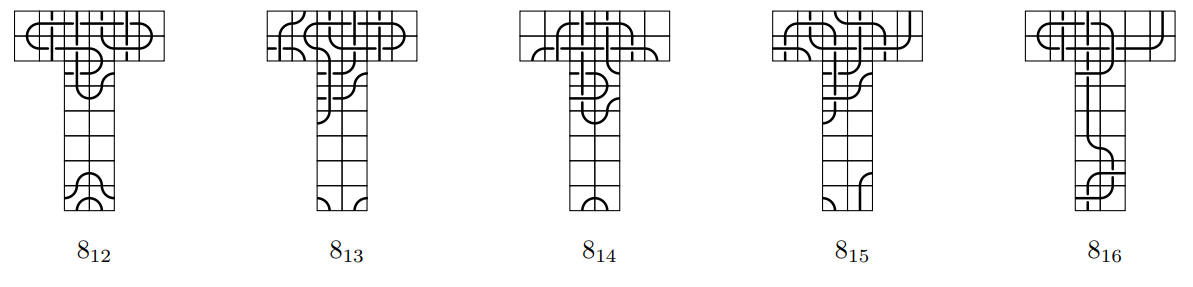}\\
    \includegraphics[scale=1]{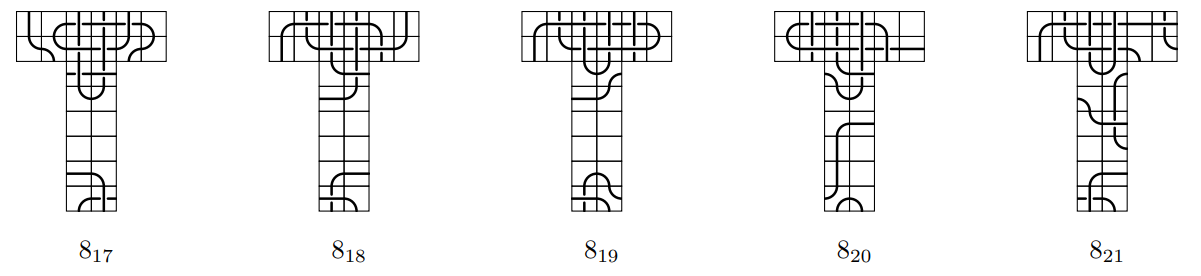}\\
\end{center}

\end{document}